\documentclass[usenames,dvipsnames, 11pt]{amsart}
\usepackage[utf8]{inputenc}

\usepackage[margin=2cm,a4paper]{geometry} 
\usepackage{multirow,listings,setspace,gnuplottex,latexsym,keyval,ifthen,moreverb,lscape,forest}
\usepackage{pgf,tikz}
\usetikzlibrary{arrows}

\usepackage{amsmath, amssymb, amsthm, bm}
\usepackage{thm-restate}
\usepackage[backref]{hyperref}
\usepackage[capitalize]{cleveref}
\usepackage{color}
\usepackage{url}
\usepackage{enumitem}
\usepackage{graphicx}
\usepackage{appendix}
\usepackage{cite}
\usepackage{mathtools}

\usepackage{xcolor}
\hypersetup{
	colorlinks,
	linkcolor={red!60!black},
	citecolor={green!60!black},
	urlcolor={blue!60!black}
}
\usepackage[open,openlevel=2,atend]{bookmark}
\usepackage[abbrev,msc-links,backrefs]{amsrefs}

\def\alabel{\upshape(\makebox[\widthof{\itshape
a}][c]{\itshape \alph*}\,)}

\newtheorem{thm}{Theorem}[section]

\newtheorem{lem}[thm]{Lemma}

\newtheorem{conj}[thm]{Conjecture}

\numberwithin{thm}{section}

\theoremstyle{definition}
\newtheorem{mydef}{Definition}

\newtheorem{ques}[thm]{Question}
\newtheorem{prob}[thm]{Problem}
\newtheorem{claim}[thm]{Claim}

\newcommand{\eps}{\varepsilon}

\DeclarePairedDelimiter{\parens}{(}{)}
\DeclarePairedDelimiter{\set}{\{}{\}}

\DeclarePairedDelimiter{\size}{|}{|}

\title[Graphs with no $\ell$-path connecting equal-degree vertices]{The density of graphs with no $\ell$-path connecting equal-degree vertices: a short proof}

\author[Y. Attwa]{Yamaan Attwa}
\address{Institut f\"ur Mathematik, Freie Universit\"at Berlin, Berlin, Germany}
\email{y.attwa@fu-berlin.de}

\author[M. Azócar]{Matías Azócar Carvajal}
\address{Fachbereich Mathematik, Universit\"at Hamburg, Hamburg, Germany.}
\email{matias.azocar.carvajal@uni-hamburg.de}

\author[S. Boyadzhiyska]{Simona Boyadzhiyska}
\address{HUN-REN Alfr\'ed R\'enyi Institute of Mathematics, Budapest, Hungary.}
\email{simona@renyi.hu}

\author[T. Pierron]{Théo Pierron}
\address{Univ Lyon, UCBL, CNRS, INSA Lyon, LIRIS, UMR5205, F-69622 Villeurbanne, France.}
\email{theo.pierron@univ-lyon1.fr}

\author[A. Taraz]{Anusch Taraz}
\address{Hamburg University of Technology, Institute of Mathematics, Hamburg, Germany.}
\email{taraz@tuhh.de}

\begin{document}

\begin{abstract}
    Addressing a question posed by Chen and Ma from an asymptotic point of view, we present a short proof for the edge density needed to guarantee that two vertices of the same degree are connected by a path of a fixed length. In particular, we show that for any sufficiently large graph, a density of at least $1/2+o(1)$ enforces the existence of two such vertices. This bound is tight for paths of odd length.
\end{abstract}

\maketitle

\setlength{\marginparwidth}{40pt}
\section{Introduction}

\bigskip

It is a simple exercise to show that every graph contains a pair of vertices of the same degree. Being so, it is only natural to ask: under what conditions can we expect such vertices to satisfy additional properties?
Erd\H{o}s and Hajnal~\cite{erdos1991problems}
formulated the following question of this flavor:
\begin{prob}
    Is it true that every $(2n + 1)$-vertex graph with $n^2 + n + 1$ edges contains two vertices of the same degree which are joined by a path of length three?
\end{prob}
As usual, the length of a path is the number of edges it contains.

\medskip

The bound of $n^2+n+1$ is tight, as demonstrated by the complete bipartite graph $K_{n,n+1}$.
In a recent paper, Chen and Ma~\cite{chen2026problem} resolved this problem, in a very strong sense, for large $n$. In fact, they also handled the case where the graph has an even number of vertices, characterizing the unique extremal construction in both situations.
\begin{thm}\label{thm:chen-ma}The following holds.\hfill
    \begin{enumerate}[label={\alabel}]
        \item Let $n \geq 600$. The unique $(2n + 1)$-vertex graph with at least $n^2 + n$ edges that does not contain two vertices of the same degree joined by a path of length three is the complete bipartite graph~$K_{n,n+1}$.\label{thm:cm-odd}
        \item There exists an integer $n_0 > 0$ such that the following holds for all $n \geq n_0$. The unique $2n$-vertex graph with at least $n^2-1$ edges that does not contain two vertices of the same degree joined by a path of length three is the complete bipartite graph $K_{n-1,n+1}$.\label{thm:cm-even}
    \end{enumerate}
\end{thm}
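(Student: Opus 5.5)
The plan is a stability argument followed by an exact cleaning step; I expect the stability step to be the real difficulty. Throughout, let $G$ have $N$ vertices ($N=2n+1$ or $N=2n$) and the prescribed number of edges, and assume no two vertices of equal degree are joined by a path of length three. The basic local fact is this. If $u\neq v$ have $d(u)=d(v)$, then there is no edge $ab$ with $a\in N(u)\setminus\{v\}$, $b\in N(v)\setminus\{u\}$ and $a\neq b$, since $u a b v$ would be a forbidden path. So the neighbourhoods of any two equal-degree vertices span essentially no edges between them.

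\textbf{Step 1: many disjoint equal-degree pairs.} There are only about $N$ possible degrees, so pigeonhole gives equal-degree pairs. Because $e(G)\geq N^2/4-O(N)$, many vertices have degree $\Omega(N)$. I would first show that one can find many disjoint equal-degree pairs among vertices of linear degree; for example, at least $N/2-O(\sqrt N)$ vertices lie in degree classes of size at least $2$ after discarding low-degree vertices. By the local fact, for each such pair $\{u,v\}$ the sets $N(u)$ and $N(v)$ have $e(N(u),N(v))=O(N)$.

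\textbf{Step 2: stability.} I would show that $G$ is $o(N^2)$-close to a complete balanced bipartite graph. The idea is to apply the regularity lemma and pass to the reduced graph $R$. The local fact should imply that $R$ has density at most about $1/2$, with equality only near the balanced complete bipartite graph. The heuristic is as follows. If two pair-vertices $u,v$ have large neighbourhoods, then either $N(u)$ and $N(v)$ are nearly equal and nearly independent, or they are nearly disjoint with almost no edges between them. Either case costs about $|N(u)||N(v)|$ non-edges. Summing this over the pairs from Step 1 and comparing with $e(G)\geq N^2/4-O(N)$ should force each such neighbourhood to be essentially an independent half. This is where I expect the main obstacle: turning the pairwise constraints into a global density bound of exactly $1/4+o(1)$ for $e(G)/N^2$. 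I would first try a Zykov-symmetrisation or weighting argument on $R$, in which clusters with equal degree profile are merged.

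\textbf{Step 3: exact result.} Take a max-cut partition $A\cup B$. By Step 2, only $o(N^2)$ edges lie inside the parts or are missing between them. Delete the $o(N)$ atypical vertices whose degree is far from $N/2$. A typical vertex in $A$ has degree close to $|B|$. If there were an edge inside $A$, or a missing edge between $A$ and $B$, then among the many typical vertices with nearly equal degrees one could find an equal-degree pair $u,v$ with $d(u)=d(v)$. Rich bipartite connections would then give a path $u a b v$, using an inside edge or an alternate route through typical vertices.

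Finally, count. Since degrees on a side differ by at most the number of defects, the edge bound $n^2+n$ (respectively $n^2-1$) forces the side sizes to be $(n,n+1)$ (respectively $(n-1,n+1)$) and forces no defects. In the even case, $K_{n,n}$ itself fails, since all its vertices have degree $n$ and are joined by paths of length three. This is why the extremal graph becomes the unbalanced $K_{n-1,n+1}$, whose two sides have different degrees. For uniqueness, I would check that adding or removing any edge of these graphs creates a bad pair.
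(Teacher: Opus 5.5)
The paper contains no proof of this theorem. It is quoted from Chen and Ma, with the range of $n$ later extended by Liu and Zeng, and the paper's own method only gives an asymptotic bound for $\ell\ge 6$. So your outline has to stand on its own, and as written it has a genuine gap that begins in Step~1.

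Pigeonhole gives only one pair of equal degrees, and at the exact threshold that can be all there is. Take the antiregular graph on $v_1,\dots,v_{2n+1}$, with $v_iv_j$ an edge iff $i+j>2n+1$.
\begin{itemize}
\item Its degree sequence is $1,2,\dots,n,n,n+1,\dots,2n$, so it has exactly $n^2+n$ edges.
\item Its only two vertices of equal degree are $v_n$ and $v_{n+1}$.
\item It violates the hypothesis only because these twins share the clique $\{v_{n+2},\dots,v_{2n+1}\}$ as common neighbourhood.
\end{itemize}
So your claim that $N/2-O(\sqrt N)$ vertices of linear degree lie in nontrivial degree classes is false as a counting statement. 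If it holds at all, it must be derived from the forbidden-path condition, and that is most of the problem. When almost all degrees are distinct, your local fact says nothing, while the trivial degree-sum bound already allows about $N^2/4$ edges, matching the extremal value to first order.

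This also defeats Step~2 as designed. The antiregular graph contains a clique on $n+1$ vertices, so it is $\Omega(N^2)$-far from every bipartite graph, yet its edge density is about $1/2$. The only feature excluding it is a single pair of vertices. That is invisible in a reduced graph, because exact degree equalities do not survive the $o(N^2)$ edits made by the regularity lemma. Hence ``the reduced graph has density at most about $1/2$, with equality only near the balanced complete bipartite graph'' cannot be certified from reduced-graph information. Whether or not stability is true, you would need an exact degree-sequence argument showing that graphs $o(N^2)$-close to such threshold configurations (a clique plus a half graph) must contain an equal-degree pair joined by a path of length three. Nothing in your outline does this, and you yourself leave the summation over pairs as a heuristic.

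Step~3 is likewise only asserted. The final count forcing parts of sizes $(n,n+1)$, respectively $(n-1,n+1)$, with no defects is where the exact work lies. You would need to use that typical vertices on opposite sides cannot share a degree, control the $o(N)$ atypical vertices with arbitrary adjacencies, and in the even case rule out parts $(n,n)$ with a few defects. Checking single-edge modifications of the extremal graph does not prove uniqueness.
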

This result was later improved by Liu and Zeng~\cite{liu2025complement}, who showed that part~\ref{thm:cm-odd} holds for all $n\geq 2$ and part~\ref{thm:cm-even} is true for all $n\geq 3$.
\medskip

In their paper, Chen and Ma~\cite{chen2026problem} defined the following extremal function, generalizing the Erd\H{o}s--Hajnal problem.

\begin{mydef}
    For any positive integers $\ell$ and $n$, let $p_\ell(n)$ denote the maximum number of edges in an $n$-vertex graph $G$ that contains no two vertices of equal degree connected by a path of length $\ell$.
\end{mydef}

The results in~\cites{chen2026problem,liu2025complement} thus show that $p_3(2n+1) = n^2+n$ for all $n\geq 2$ and $p_3(2n) = n^2-1$ for all~$n\geq 3$. Chen and Ma further showed that
\begin{align*}
    p_1(n) = \frac{n^2}{2} - \frac{n\sqrt{2n}}{3} + O(n) \quad \text{and} \quad p_2(2n) = \frac{n(n+1)}{2}.
\end{align*}
Additionally, they conjectured that the behavior of $p_\ell(2n+1)$ should be the same for any odd $\ell\geq 3$.

\begin{conj}\label{conj:chen-ma}
    For any odd integer $\ell\geq 3$ and sufficiently large $n$, it holds that \hfill ${p_\ell(2n + 1) = n^2 + n}$.
\end{conj}

In a very recent paper, Liu and Zeng~\cite{liu2026paths} tackled the case $\ell=5$, once again in a strong sense, proving an analog of both parts of \cref{thm:chen-ma} (for $n\geq 11$ in part~\ref{thm:cm-odd} and $n\geq 13$ in~\ref{thm:cm-even}). The conjecture was subsequently resolved in full by Zhao, Wang, and Lu~\cite{zhao2026generalization}, who once again proved that $K_{n,n+1}$ is the unique extremal example when the number of vertices is odd. The case of an even number of vertices was not addressed directly in~\cite{zhao2026generalization}, even though the authors comment that their methods likely extend to this setting.

For even values of $\ell$, our knowledge is much more limited. The \emph{half graph} on $2n$ vertices is defined as the bipartite graph with vertex classes $\set{u_1,\dots, u_n}$ and $\set{v_1,\dots, v_n}$, where $u_iv_j$ is an edge if and only if $i\leq j$. It is not difficult to check that in the half graph no vertices of the same degree are connected by an even-length path, implying that
$p_\ell(2n) \geq  \frac{n(n+1)}{2}$ for all even $\ell\geq 2$. As far as we are aware, nothing more is known about this function when $\ell$ is even, though in~\cite{liu2026paths} Liu and Zeng refer to upcoming work showing that $p_4(2n) \leq (1 - \eps + o(1))n^2$ for some constant $\eps \in (0, 1/2]$ and
sufficiently large $n$.

\medskip
In all previous work on the problem, the arguments have been rather technical and involved.
The goal of this note is to provide a short proof of an asymptotic version of \cref{conj:chen-ma}. Our main result addresses the remaining cases by showing that the number of edges in a graph not containing a pair of vertices connected by a path of some fixed length $\ell\geq 6$, odd or even, is always at most $\frac12\binom{n}{2}$ asymptotically. Our analysis is somewhat reminiscent of that provided in~\cite{zhao2026generalization} but it is much simpler.
\begin{thm}
    \label{thm:asymptotic}
    For every $\ell\geq 6$ and every sufficiently large $n$, we have $p_\ell(n) \leq \parens*{\frac{1}{4} + o(1)}n^2$.
\end{thm}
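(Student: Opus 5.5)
Suppose, for contradiction, that $G$ is an $n$-vertex graph with $e(G)\geq (\frac14+\eps)n^2$ containing no two vertices of equal degree joined by a path of length $\ell$. The plan is to find a large dense subgraph of $G$ with strong connectivity. Then any two vertices of it are joined by paths of every length in a wide range, in particular $\ell$. Finally, pigeonhole on degrees will produce two vertices of equal degree inside this subgraph.

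\textbf{Step 1 (cleaning).} We repeatedly delete vertices of degree less than $(\frac12+\eps/2)$ times the current number of vertices. A standard counting argument shows the process stops at a subgraph $H$ on $m\geq \eps' n$ vertices with minimum degree $\delta(H)\geq (\frac12+\eps/2)m$. The same holds if we instead delete vertices of degree at most $\eps n$ in $G$ first; we keep whichever version is more convenient.

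\textbf{Step 2 (connectivity).} Minimum degree above $m/2$ gives every pair $u,v$ of vertices of $H$ at least $\eps m$ common neighbours. From this we derive that $H$ contains a path of length exactly $\ell$ between any two distinct vertices, for every fixed $\ell\geq 6$. For large $m$, a greedy argument works: start at $u$, walk $\ell-2$ steps through fresh vertices (possible since degrees are linear), and then close with a common neighbour of the current endpoint and $v$, which avoids the $O(\ell)$ used vertices. This uses only two-step closure, so in fact $\ell\geq 2$ suffices here. The restriction $\ell\ge 6$ presumably enters in Step 3, where we may have to route through $G\setminus H$.

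\textbf{Step 3 (equal degrees).} This is the main obstacle: showing that two vertices of $H$ have equal degree \emph{in $G$}. Distinct $G$-degrees on the vertices of $H$ only force degrees spread over $m$ values. That is possible if $m\le n$, so plain pigeonhole fails. We therefore need to count more cleverly. Let $X$ be the set of vertices $v$ of $G$ that are joined by a path of length $\ell$ to every vertex of $H$. The vertices of $X$ must have pairwise distinct degrees, so $|X|\le$ the number of available degree values, and in particular the degrees in $X$ are spread out, giving $\sum_{v\in X}\deg v\le \sum_{i=0}^{|X|-1}(n-1-i)$. We will show that $X$ is nearly everything incident to many edges. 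Any vertex $w\notin H$ with at least $\ell$ neighbours in $H$ reaches every vertex of $H$ by length-$\ell$ paths, by Step 2 applied with length $\ell-1$ from a neighbour in $H$. A vertex $w$ whose neighbours lie at distance two from $H$ through vertices of $G$ can be handled similarly, and this is where lengths $\ell\geq 6$ give room to route in and out of $H$. Consequently, the vertices outside $X$ have few edges to $X$. Splitting $e(G)$ as $e(X)+e(X,\overline X)+e(\overline X)$, the distinct-degree constraint bounds the first two terms by roughly $\sum_i(n-i)$ over $|X|$ values. The complement $\overline X$ spans a graph to which we apply induction, or the same cleaning, since it contains no dense $H$-like piece attached to $X$.

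Optimizing $|X|=x$, the total is at most about $xn-x^2/2+(n-x)^2/4$. Since $x$ is linear, this forces $e(G)\le(\frac14+o(1))n^2$ only after carefully restricting which degree values occur. I expect to refine this step by iterating: $\overline X$ itself, if dense, contains another $H'$, and the degree sets of the successive $X$'s must be disjoint. Summing these degree constraints over the iteration should yield the half-of-$\binom n2$ bound, and this bookkeeping is where I expect the real difficulty to lie.
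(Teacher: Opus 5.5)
\textbf{There is a genuine gap: Step 3 is not carried out, and the pieces you sketch for it do not work.} Steps 1 and 2 are fine, but Step 3 is where the whole theorem lives.

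First, you define $X$ as the vertices joined by an $\ell$-path to \emph{every vertex of $H$}. Two vertices of $X\setminus V(H)$ need not be joined to each other, so distinct degrees on $X$ do not follow. You would have to take $X$ to be the vertices with at least $\ell+2$ neighbours in $H$. Then you connect $w,w'\in X$ by stepping to distinct neighbours $h,h'\in V(H)$ and using Step~2 with length $\ell-2$ inside $H$.

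Second, the bound $e(\overline X)\le (n-x)^2/4$ by ``induction'' is not available. The hypothesis concerns degrees in $G$, and these change when you pass to $G[\overline X]$, so the property is not hereditary.

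Third, and most importantly, even granting everything the counting cannot reach $\frac14$. Your function $xn-\frac{x^2}{2}+\frac{(n-x)^2}{4}$ has derivative $\frac{n-x}{2}\ge 0$ and equals $\frac{n^2}{2}$ at $x=n$, so it only reproduces the trivial bound. With the corrected $X$, you know two things: the vertices of $X\supseteq V(H)$ have distinct degrees, possibly the top values $n-1,n-2,\dots$; and the vertices outside $X$ have degree at most $n-m+\ell+1$. This only bounds the degree sum by $xn-\frac{x^2}{2}+(n-x)(n-m)$, which equals $\frac58 n^2$ at $x=m=\frac n2$. So anchoring on a dense subgraph gives no upper cap on the distinct degrees that matches the connectivity threshold. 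The proposed iteration over successive $H'$ is a hope, not an argument.

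\textbf{The missing idea is to anchor on the few vertices of largest degree.}
\begin{itemize}
\item For $\ell=2k$, let $b_1,\dots,b_{k-1}$ be the $k-1$ vertices of largest degree, and let $D$ be the smallest of their degrees.
\item If $D<\frac n2+k+1$, all but $k-2$ vertices have degree below $\frac n2+k+1$, and $e(G)\le\frac{n^2}{4}+O(kn)$ trivially.
\item Otherwise, every vertex of degree at least $n-D+k+3$ has at least $k+3$ common neighbours with each $b_i$. Consecutive $b_i$ share at least $2D-n\ge 2k+2$ neighbours.
\item Hence any two such vertices $x,y$ outside the hubs are joined greedily by a path $xa_1b_1a_2\dots b_{k-1}a_ky$ of length $2k$.
\item So these vertices have distinct degrees in the window $[n-D+k+3,D]$. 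The window is capped at $D$ precisely because the $b_i$ are the top vertices. All remaining vertices have degree at most $n-D+k+2$.
\item Summing gives $2e(G)\le \frac{D^2}{2}-\frac{(n-D)^2}{2}+n(n-D)+O(kn)=\frac{n^2}{2}+O(kn)$, for every $D$.
\end{itemize}
This coupling of the connectivity threshold $n-D$ with the cap $D$ is exactly what your approach lacks.

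For odd $\ell$ the paper's hub path has the wrong parity. It therefore needs an odd detour: an edge inside the set $B$ of vertices of degree at least $\frac n2+k+1$, or an edge between the neighbourhoods of two of them. When there is no such edge, $G$ is bipartite-like around $B$. The paper then bounds $e(G)$ directly by counting edges between $B$, the vertices with at least two neighbours in $B$, and the private neighbourhoods.
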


Clearly, this bound is tight when $\ell$ is odd. We tend to believe that it should not be tight for even $\ell$ and in that case the maximum number of edges should be closer to $\frac14\binom{n}{2}$. We pose this as a question.

\begin{ques}
    Is it true that $p_{\ell}(n) = (\frac{1}{8}+o(1))n^2$ for all even $\ell$?
\end{ques}

\subsection*{Notation}
Our notation is mostly standard. An $xy$-path is a path connecting vertices $x$ and $y$. Let~$G$ be a graph and $A,B\subseteq V(G)$ be any subsets (not necessarily disjoint). We write $G[A]$ for the subgraph of $G$ induced by $A$, that is, the subgraph obtained by deleting the vertices not in $A$. The neighborhood of $A$, denoted~$N(A)$, refers to the set~$\set{v\in V(G)\,:\, va\in E(G) \text{ for some } a\in A}$. We write $E(A,B)$ for the set $\set{ab\in E(G) \,:\, a\in A, b\in B}$.

\section{The proof}

This section is devoted to the proof of \cref{thm:asymptotic}. Throughout this section, we will always assume that $\ell\geq 2$ is a fixed integer, $n$ is sufficiently large with respect to $\ell$, and $G$ is an $n$-vertex graph. The proof of \cref{thm:asymptotic} relies on the following technical lemma, which allows us to find long paths when~$G$ has too many vertices of large enough degree. We consider several cases depending on whether we look for paths of even or odd length.

\begin{lem}\label{lem:large-vertices-paths}
    Let $D \geq \frac{n}{2} + k+1$ be an integer. Suppose $G$ contains a set $B = \set{b_1,\dots, b_t}$ of $2\leq t\leq k$ vertices of degree at least $D$, and let $x,y\notin B$ be distinct vertices of degree at least $n-D+t+4$. Then:
    \begin{enumerate}[label=\alabel]
        \item There exists an $xy$-path of length $2t+2$.\label{lvp:simple-path}
        \item If $G[B]$ contains an edge, then there exists an $xy$-path of length $2t+1$. \label{lvp:b-independent}
        \item If  $E(N(b_i),N(b_j))\neq \emptyset$ for some $i\neq j$, then there exists an $xy$-path of length $2t+3$. \label{lvp:independent-neighborhoods}
    \end{enumerate}
\end{lem}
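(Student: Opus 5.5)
The plan is to build every path greedily from short pieces, using the fact that vertices of large degree have many common neighbours. First I would record two counting facts. If $u,w$ both have degree at least $D$, then $|N(u)\cap N(w)|\ge 2D-n\ge 2k+2$. If $u\in B$ and $z\in\set{x,y}$, then $|N(u)\cap N(z)|\ge D+(n-D+t+4)-n = t+4$.

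For \ref{lvp:simple-path}, the target path is
\[
x\,c_0\,b_1\,c_1\,b_2\cdots b_t\,c_t\,y ,
\]
where $c_0\in N(x)\cap N(b_1)$, $c_i\in N(b_i)\cap N(b_{i+1})$ for $1\le i<t$, and $c_t\in N(b_t)\cap N(y)$. This path has $2t+2$ edges. I would choose the $c$'s in the following order.
\begin{enumerate}[label=\alabel]
\item Choose $c_0$ first. It must avoid $B\setminus\set{b_1}$ and $y$; it is automatically different from $x$ and $b_1$. That excludes at most $t$ of the $t+4$ candidates.
\item Choose $c_t$ next. It must avoid $B$, $x$ and $c_0$, which excludes at most $t+2$ of the $t+4$ candidates.
\item Choose the middle vertices $c_1,\dots,c_{t-1}$ one at a time. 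Each must avoid the other $t-2$ vertices of $B$, the vertices $x,y$, and the $c$'s already chosen. That is at most $2t\le 2k$ exclusions out of at least $2k+2$ candidates.
\end{enumerate}
So all choices succeed and the vertices are distinct.

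For \ref{lvp:b-independent}, suppose $b_ib_j$ is an edge. Since the order of $B$ is arbitrary, I would relabel so that $b_1b_2\in E(G)$. I then build the same path as in \ref{lvp:simple-path}, but replace the segment $b_1c_1b_2$ by the edge $b_1b_2$. This gives length $2t+1$. The counting is the same as before, with one fewer vertex to choose.

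For \ref{lvp:independent-neighborhoods}, take an edge $uv$ with $u\in N(b_i)$ and $v\in N(b_j)$, and relabel so that $b_i=b_1$ and $b_j=b_2$. Replacing $b_1c_1b_2$ by $b_1\,u\,v\,b_2$ adds one edge and gives length $2t+3$. In the greedy choices I would additionally forbid $u$ and $v$. This costs at most two more exclusions, and the slack computed above (at least $4$ spare candidates for $c_0,c_t$ and $2$ for the middle vertices) should just absorb it; if it is too tight at some step, I would choose the $c$'s in a different order.

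The main obstacle is that $u$ or $v$ may coincide with a vertex that the path must use, namely some vertex of $B$ or $x$ or $y$. For instance, $u=b_m$ forces $b_1b_m\in E(G)$, and $u=x$ makes $x$ adjacent to $b_1$. Note that $u=v$ is impossible since $uv$ is an edge. My first attempt would be to exploit these extra adjacencies. If $u=x$, I would start the path $x\,v\,b_2\cdots$ and absorb the lost edges by routing elsewhere, rebalancing lengths. If $u=b_m$, I would reorder $B$ so that the edge is used inside a longer detour. Failing that, I would try to show that the number of edges of $E(N(b_i),N(b_j))$ incident to the at most $t+2$ special vertices cannot be everything. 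This would allow replacing $uv$ by an edge avoiding $B\cup\set{x,y}$. It is the step I expect to require genuine case analysis.
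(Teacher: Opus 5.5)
Your parts (a) and (b), and the main construction in (c), are the paper's argument. The paper also chooses common neighbours greedily with the same exclusion counts, and in (c) it also fixes the edge ($a_2a_2'$ in its notation) first and forbids its endpoints when choosing the connectors. Your claim that the slack ``should just absorb it'' is correct, but only with exact counts. The vertex $c_t$ need only avoid $(B\setminus\{b_t\})\cup\{x,c_0,u,v\}$, which is at most $t+3$ of its $t+4$ candidates. For a middle vertex, the two new exclusions $u,v$ are offset by $c_1$ no longer being chosen, so at most $2t+1<2t+2$ candidates are excluded.

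The degenerate case you flag is a genuine gap, but no case analysis can close it, because (c) is false as stated when the edge meets $B\cup\{x,y\}$. Here is a counterexample.
Let $P=B\cup\{x,y\}$ and $Q=V(G)\setminus P$. Let $G$ consist of all edges between $P$ and $Q$ together with the single edge $b_1b_2$, and take $D=\lceil n/2\rceil+k+1$.
\begin{enumerate}[label=(\roman*)]
\item For large $n$, every $b_i$ has degree at least $n-t-2\ge D$, and $x,y$ have degree $n-t-2\ge n-D+t+4$.
\item $b_2q\in E(N(b_1),N(b_2))$ for every $q\in Q$, so the hypothesis of (c) holds.
\item Since $x,y\in P$ and $b_1b_2$ is the only edge not crossing between $P$ and $Q$, every odd $xy$-path uses $b_1b_2$ exactly once.
\item Its remaining $2t+2$ edges form two alternating segments, each starting and ending in $P$. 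A segment with $2s$ edges contains $s+1$ vertices of $P$.
\item So a path of length $2t+3$ would contain $(t+1)+2=t+3$ distinct vertices of $P$, but $|P|=t+2$.
\end{enumerate}
Using the edge $xb_1$ instead of $b_1b_2$ gives a counterexample in which $B$ is independent.

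In both graphs every edge of $E(N(b_i),N(b_j))$ meets $B\cup\{x,y\}$. So neither rerouting through extra adjacencies nor finding a replacement edge can work. The paper's proof has exactly the same hole: it silently assumes $a_2,a_2'\notin B\cup\{x,y\}$.

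The correct form of (c) adds the hypothesis that the edge $uv$ satisfies $u,v\notin B\cup\{x,y\}$, and for that version your argument is complete. It is also all the odd case of the theorem needs. There, edges inside $B$ are handled via (b), so $B$ may be assumed independent. The two endpoints of one fixed edge of $E(N(u),N(w))$ can then be added to the $O(1)$ exceptional vertices, at a cost of $O(n)$ edges.
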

\begin{proof}\hfill
    \begin{enumerate}[leftmargin=*,label=\alabel]
        \item  Set $b_0=x$ and $b_{t+1}=y$. We iteratively find a family of pairwise distinct vertices $a_1,\ldots,a_{t+1}\notin B\cup\{x,y\}$ such that $a_i$ is a common neighbor of $b_{i-1}$ and~$b_i$ for each $1\leq i\leq t$. Once this is done, $xa_1b_1a_2\ldots a_t b_t a_{t+1}y$ will form the desired~$xy$-path of length $2t+2$.

              We first choose $a_1$ and $a_{t+1}$. Observe that $\deg(b_i)+\deg(b_{i+1})\geq n+t+4$ for $i\in\{0,t\}$. Hence $|N(b_i)\cap N(b_{i+1})|\geq t+4$, and thus this set contains two distinct vertices not in $B\cup\{x,y\}$.

              Now, for $1<i\leq t-1$, if $a_1,\ldots,a_i$ are already selected, we similarly find that $|N(b_i)\cap N(b_{i+1})|\geq 2D-n\geq 2t+2$. So we can choose the next vertex ${a_{i+1}\in N(b_i)\cap N(b_{i+1})}$ outside of~$B\cup\{x,y,a_1,\ldots,a_i, a_{t+1}\}$; indeed, since~${b_i\notin N(b_i)}$ and~$b_{i+1}\notin N(b_{i+1})$, the latter set contains at most $2t$ vertices of~$N(b_i)\cap N(b_{i+1})$.
        \item Assume without loss of generality that $b_1b_2$ is an edge. We choose $a_1,\ldots,a_{t+1}$ as in the previous case, but build a path using the edge $b_1b_2$ instead of the vertex~$a_2$.
        \item Assume without loss of generality that there is an edge $a_2a_2'$ with $a_2\in N(b_1)$ and~$a_2'\in N(b_2)$. In this case, we choose $a_1 \in (N(b_1)\cap N(x))\setminus (B\cup\{x,y,a_2,a'_2\})$, since this set contains at least $t+4-(t+2)\geq 2$ vertices. Similarly, we choose~$a_{t+1} \in (N(b_t)\cap N(y))\setminus (B\cup\{x,y,a_2,a'_2\})$. Since, in the previous items, we had at least two choices for the vertices $a_3,\dots, a_t$, we can also choose them distinct from $a'_2$. We thus obtain the path $xa_1b_1a_2a_2'b_2a_3\dots a_t b_t a_{t+1}y$ of length~$2t+3$. \qedhere
    \end{enumerate}
\end{proof}

Using this lemma, we can complete the proof of \cref{thm:asymptotic}. The proof is split into two cases, depending on the parity of $t$. We start with the less involved even case.

\subsection{Even-length paths}

Assume that $G$ does not contain two vertices of the same degree connected by a path of length $\ell=2k$. Our goal is to show that $e(G) = n^2/4+O(n)$. Label the vertices of $G$ as~$v_1,\ldots,v_n$ in non-increasing degree order, and let $\Delta_k = \deg(v_{k-1})$.  We consider two cases depending on whether $\Delta_k$ is large or not.
\smallskip

\noindent\textbf{Case 1: $\Delta_k < \frac{n}{2} + k+1$}.
Summing up the degrees of the vertices, and upper-bounding~$\deg(v_i)$ by~$n$ if $i<k-1$ and by $\Delta_k$ otherwise, we obtain
\begin{align*}
    2e(G) \leq (k-2)n+(n-k+2)\parens*{\frac{n}{2} + k} \leq \frac{n^2}{2} + \frac32 kn,
\end{align*}
as claimed.

\smallskip
\noindent\textbf{Case 2: $\Delta_k \geq \frac{n}{2} + k+1$}.
Our goal is to apply Lemma~\ref{lem:large-vertices-paths}~\ref{lvp:simple-path}, with $t=k-1$, $B=\{v_1,\ldots,v_{k-1}\}$, and~$D=\Delta_k$. If there is at most one vertex $x\notin B$ of degree at least $n-\Delta_k+(k-1)+4 = n-\Delta_k+k+3$, we are done by a similar argument as above. So we may assume there exist at least two of them, say~$x$ and $y$. By \cref{lem:large-vertices-paths}, there exists an $xy$-path of length $2(k-1)+2 = 2k$, so we conclude that~$x$ and~$y$ must have distinct degrees.

Again, summing up the degrees of the vertices, we obtain that $2e(G)$ is bounded above by
\begin{align}
     & \underbrace{(k-1)n}_{v_i\text{ for } i< k} + \underbrace{\sum_{i=n-\Delta_k+k+3}^{\Delta_k}i}_{\text{at most one vertex of each degree }}+\underbrace{\parens*{n - \parens*{\Delta_k-(n-\Delta_k+k+3)}+1}(n-\Delta_k+k+2)}_{\text{remaining vertices}}\label{eq:edge-sum} \\
     & = (k-1)n + \binom{\Delta_k+1}{2}-\binom{n-\Delta_k+k+3}{2}+(2n-2\Delta_k+k+4)\parens*{n-\Delta_k+k+2}\notag                                                                                                                                                               \\
     & \leq (k-1)n + \binom{\Delta_k+1}{2}-\binom{n-\Delta_k+k+3}{2}+(n-k+2)\parens*{n-\Delta_k+k+2}\notag                                                                                                                                                                       \\
     & = \frac{\Delta_k^2}{2}-\frac{(n-\Delta_k)^2}{2}+n(n-\Delta_k)+O(n)\notag = \frac{n^2}{2} + O(n), \notag
\end{align}
which concludes the even case.

\subsection{Odd-length paths}

Assume now that $G$ does not contain two vertices of the same degree connected by a path of length $\ell = 2k+1$ with $k\geq 3$. We again label the vertices $v_1,\ldots,v_n$ by non-increasing degree and set $\Delta_k = \deg(v_k)$.

Let $D= \frac{n}{2} + k+1$ and $B = \set{v\in V(G)\,:\, \deg(v)\geq D}$. Note that $2e(G) \leq n\size{B} + D(n-\size{B})$. If $\size{B} = o(n)$, then $e(G) \leq \frac{n^2}{4}+o(n^2)$ and we are done. So we may assume that $\size{B} = \omega(n)$ and in particular that $\Delta_k\geq D$. Additionally, write $R = V(G)\setminus (B\cup N(B))$.

\smallskip
\noindent\textbf{Case 1: $G[B]$ contains an edge $uw$ or $E(N(u),N(w))\neq\emptyset$ for some distinct $u,w\in B$}.

Suppose first that $G[B]$ contains the edge $uw$. Let $x,y$ be two vertices of degree at least $n-\Delta_k+k+4$. Select $k-2$ vertices $b_1,\dots, b_{k-2}\in B\setminus\set{u,w,x,y}$, and write $b_{k-1} = u$, $b_{k}=w$. By \cref{lem:large-vertices-paths}~\ref{lvp:b-independent}, $G$ must contain an $xy$-path of length $2k+1$, and hence $x$ and $y$ have distinct degrees. In particular, no two vertices of $B$ (except possibly $u,w$) share the same degree.
Then proceeding similarly to~\eqref{eq:edge-sum}, where we allow for the degrees of $u$ and $w$ to be as large as $n$, we obtain
\begin{align*}
    2e(G) & \leq (k+2)n + \sum_{i=n-\Delta_k+k+4}^{\Delta_k}i+\parens*{n - \parens*{\Delta_k-(n-\Delta_k+k+4)}+1}\cdot (n-\Delta_k+k+3) \\
          & \leq \frac{n^2}{2} + O(n),
\end{align*}
using that $n\geq \Delta_k\geq D\geq n/2$.

The latter case, where $E(N(u),N(w))\neq\emptyset$ for some distinct $u,w\in B$, is similar, except we now take   $b_1,\dots, b_{k-3}\in B\setminus\set{u,w,x,y}$, and write $b_{k-2} = u$, $b_{k-1}=w$ and apply \cref{lem:large-vertices-paths}~\ref{lvp:independent-neighborhoods}.
\smallskip

\noindent\textbf{Case 2: $B$ is an independent set and $E(N(u),N(w))=\emptyset$ for any distinct $u,w\in B$.}

In this case, $B\cap N(B) = \emptyset$. Partition $N(B)$ into sets $X\cup \parens*{\bigcup_{v\in B}Y_v}$, where $X = \set{w\in V(G)\,:\, \size{N(w)\cap B}\geq 2}$ and $Y_v = \set{w\in V(G) \,:\, N(w)\cap B = \set{v}}$ for all $v\in B$. Note that the only edges within $N(B)$ are fully contained in the sets $Y_v$. We now claim that $X$ is relatively large.

\begin{claim}
    \label{cl:sizeofX}
    $|X|\geq \frac{|B|-2}{|B|-1}\cdot\frac{n}{2}$.
\end{claim}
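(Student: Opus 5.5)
We count incidences between $B$ and $N(B)$ in two ways. Every $v\in B$ has $\deg(v)\geq D>\frac n2$, and since $B$ is independent, all neighbours of $v$ lie in $N(B)=X\cup\bigcup_{u\in B}Y_u$. More precisely, $N(v)\subseteq X\cup Y_v$, because a vertex in $Y_u$ with $u\neq v$ has $u$ as its only neighbour in $B$. Hence
\begin{align*}
\deg(v)\leq |N(v)\cap X| + |Y_v| \quad\text{for every } v\in B.
\end{align*}
The sets $X$ and $Y_v$ ($v\in B$) are pairwise disjoint subsets of $V(G)\setminus B$. Therefore
\begin{align*}
|X|+\sum_{v\in B}|Y_v|\leq n-|B|\leq n.
\end{align*}

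Summing the first inequality over all $v\in B$ gives
\begin{align*}
|B|\cdot\frac n2 < \sum_{v\in B}\deg(v)\leq e(X,B)+\sum_{v\in B}|Y_v| \leq |B|\,|X| + n-|X|.
\end{align*}
Here we used the trivial bound $e(X,B)\leq |B||X|$ and $\sum_v|Y_v|\leq n-|X|$. Rearranging yields $(|B|-1)|X|\geq |B|\frac n2 - n = (|B|-2)\frac n2$, which is exactly the claim $|X|\geq \frac{|B|-2}{|B|-1}\cdot\frac n2$.

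The only delicate point is making sure that the $Y_v$ are genuinely disjoint from each other and from $X$, and that $N(v)$ meets no $Y_u$ with $u\neq v$. Both facts follow directly from the definitions, since $X$ and the $Y_v$ are classified by $N(w)\cap B$. Case 2's hypothesis is used only through $B\cap N(B)=\emptyset$: it guarantees that $N(v)\subseteq N(B)$ and that $B$ is disjoint from $X$ and the $Y_v$. If $|B|=1$ the claim is vacuous, and we may assume $|B|\geq 2$ here anyway since $|B|$ is large. No further obstacle is expected; the claim is a short double-counting argument.
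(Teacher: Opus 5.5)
Your proof is correct and is essentially the paper's argument: both double count the edges between $B$ and $X$, bounding $|E(B,X)|$ above by $|B||X|$ and below by $D|B|-\sum_{v\in B}|Y_v|\geq \frac n2|B|-(n-|X|)$, using that each vertex of $Y_v$ sends only one edge to $B$ and that $X$ and the $Y_v$ are pairwise disjoint. Your explicit check that $N(v)\subseteq X\cup Y_v$ is exactly what justifies the paper's identity $|E(B,X)|=|E(B,N(B))|-\sum_{v}|E(B,Y_v)|$.
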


\begin{proof}
    We proceed by double counting the edges between $B$ and $X$. On the one hand, each vertex in~$X$ is adjacent to at most $\size{B}$ vertices in $B$, so $|E(B,X)|\leq \size{X}\cdot \size{B}$. On the other hand, we have
    \begin{align*}
        \size{E(B,X)} & = \size{E(B,N(B))}-\sum_{v\in B} \size{E(B,Y_v)}\geq D\size{B} - \sum_{v\in B}\size{Y_v} \\
                      & \geq \frac n2\size{B} - (n-|B|-|X|-|R|)\geq (\size{B}-2)\frac n2+\size{X},
    \end{align*}
    where in the first inequality we used that every vertex in  $Y_v$ has at most one edge to $B$.
    Combining these two bounds proves the claim.
\end{proof}

We now conclude by estimating the number of edges of $G$. For this, recall that $B$ is an independent set, the only edges within $N(B)$ can occur inside the individual sets $Y_v$, and every vertex in $R$ has degree at most $D$ by definition. Hence
\begin{align}
    e(G) & \leq \size{B}\cdot \size{X}+\sum_{v\in B} \parens*{\size{Y_v}+{\size{Y_v}\choose 2}}+ D\size{R}\notag                                            \\
         & \leq \size{B}\cdot \size{X} + \binom{\sum_{v}\size{Y_v}}{2} + \frac n2\parens*{n-\size{B}-\size{N(B)}} +O(n)\notag                               \\
         & \leq \size{B}\cdot \size{X} + \binom{\size{N(B)}-\size{X}}{2} + \frac n2\parens*{n-\size{B}-\size{N(B)}} +O(n),\label{eq:edge_bound_quadratic_X}
\end{align}
where for the second inequality we used that $\sum_{v\in B} {\size{Y_v}\choose 2}\leq \binom{\sum_{v}\size{Y_v}}{2}$, and that $R = V(G)\setminus (B\cup N(B))$ and $B\cap N(B)=\emptyset$.
Note that, with respect to $\size{X}$, the expression in~\eqref{eq:edge_bound_quadratic_X} is an upward-opening parabola, so its value is maximized at some extreme point for $\size{X}$. By Claim~\ref{cl:sizeofX} and the fact that~$X\subseteq N(B)$, we know that $\frac{(|B|-2)}{(|B|-1)} \cdot \frac n2 \leq \size{X}\leq \size{N(B)}$. We consider these two cases separately. Observe that $\frac{n}{2} \leq D\leq \size{N(B)}\leq n-\size{B}$, so in particular $\size{B} \leq n-D < \frac n2$.
\medskip

\noindent\textbf{Case 1: $\size{X} = \size{N(B)}$}.  Then~\eqref{eq:edge_bound_quadratic_X} becomes
\begin{align*}
     & \size{B}\cdot \size{N(B)} + \frac n2\parens*{n-\size{B}-\size{N(B)}} +O(n)                                      \\&= \size{N(B)} \parens*{\size{B} - \frac n2} +\frac{n^2}{2} - \frac{n}{2}\size{B} + O(n)\\
     & \leq  \frac n2\parens*{\size{B} - \frac n2} +\frac{n^2}{2} - \frac{n}{2}\size{B} + O(n) = \frac {n^2}{4} +O(n),
\end{align*}
as required.

\smallskip
\noindent\textbf{Case 2: $\size{X} = \frac{(|B|-2)}{(|B|-1)} \cdot \frac n2$}. Note that in this case~\eqref{eq:edge_bound_quadratic_X} is also an upward-opening parabola with respect to $\size{N(B)}$. Hence, it is maximized when either $\size{N(B)} = D$ or $\size{N(B)} = n-\size{B} \leq n$. In the latter case,~\eqref{eq:edge_bound_quadratic_X} is upper bounded by
\begin{align*}
     & \size{B}\cdot \size{X} + \binom{n-\size{B}-\size{X}}{2} +O(n) \leq \size{B}\cdot \size{X} + \frac{\parens*{n-\size{B}-\size{X}}^2}{2} +O(n) \\
     & \leq \frac{\size{B}^2}{(\size{B}-1)^2}\cdot\frac{n^2}{8} + \frac{\size{B}^2}{2} +O(n) \leq \parens*{1+o(1)}\frac{n^2}{4},
\end{align*}
since $\size{B} = \omega(n)$ and $\size{B}\leq \frac n2$.

\smallskip
In the  case $\size{N(B)} = D$, we bound~\eqref{eq:edge_bound_quadratic_X} by
\begin{align*}
     & \size{B}\cdot \size{X} + \binom{n/2+k+1-\size{X}}{2} + \frac n2\parens*{n-\size{B}-\size{N(B)}} +O(n)                                                         \\
     & \leq \size{B}\cdot \frac{n}{2} + \frac{\parens*{\frac{n}{2(\size{B}-1)}}^2}{2} + \frac n2\parens*{\frac{n}{2}-\size{B}} +O(n) =\parens*{1+o(1)}\frac{n^2}{4},
\end{align*}
where we used that $\size{X}\leq \frac n2$ and $\size{B}=\omega(n)$. This concludes the proof for the odd-length case.

\section*{Acknowledgments}
\noindent (YA) Research supported by the DFG under Germany´s Excellence Strategy – The Berlin
Mathematics Research Center MATH+ (EXC-2046/1, project ID: 390685689).

\noindent(MA) Research supported by ANID and DAAD under ANID-PFCHA/Doctorado Acuerdo Bilateral DAAD Becas Chile/2023-62230021.

\noindent(SB) Research supported by ERC Advanced Grants ``GeoScape'', no.~882971 and ``ERMiD'', no.~101054936.

\noindent(TP) Research supported by ANR Grant ENEDISC ANR-24-CE48-7768-01.

\bibliographystyle{amsplain}
\bibliography{biblio.bib}
\end{document}